\numberwithin{equation}{section}
\newtheorem{lemma}{Lemma}
\newtheorem{corollary}{Corolarry}
\begin{document}
\title{HIDDEN SYMMETRIES AND $j(\tau)$}
\author{K.~M. Bugajska}
\address{Department of Mathematics and Statistics,
York University,
Toronto, ON, M3J 1P3}
\email{bugajska@yorku.ca}
\date{\today}
\begin{abstract}
We show that, for supersingular prime p, the image of a unique meromorphic function $G_p$ on $X_0(p)$ (of the degree two, with the polar divisor ${\{[0]_0,[\infty]_0\}}$)  under a certain Hecke operator is equal to $j(\tau)$ (up to some additive constant). This generates quantities of relations between the coefficients of $j(\tau)$ and leads to some group of hidden symmetries whose order must be divided by $p$.   
\end{abstract}

\subjclass[2010]{Primary 30F99; Secondary 08A99}

\maketitle

\section{INTRODUCTION}
Let $R_p$ denote a fundamental domain of $\Gamma_0(p)$ which for $p\geq{3}$ is given as
\begin{equation*}
R_{p}=R\cup{\bigcup}_{m=-\frac{p-1}{2}}^{\frac{p-1}{2}}\beta_{m}R,\quad \beta_{m}=ST^m,\quad S=\begin{pmatrix}0&1\\-1&0\end{pmatrix},T=\begin{pmatrix}1&1\\0&1\end{pmatrix}
\end{equation*}

Here $R$ is the standard quadrilateral fundamental domain for ${\Gamma}=SL_{2}\mathbb{Z}$ in the upper half-plane $\textit{H}$  i.e.
\begin{equation*}
R=\{\tau\in{\textit{H}}; |\tau|\geq{1}, {-1}\leq{Re\tau}\leq{0}\}\cup{\{\tau\in{\textit{H}}; |\tau|>1, 0<Re\tau<1\}}
\end{equation*}

We choose the domain $R_p$ because the imaginary axis is its symmertry   axis and this makes the visualization of the appropriate pairs of the Weierstrass points on $X_0(p)$ much easier. By $\mathcal{P}_{\mathcal{S}}$ we will denote the set of all supersingular primes $\{2,3,5,7,11,13,17,19,23,29,31,41,47,59,71\}$. They have the property that  the genus of $\textit{H}^{*}/{\Gamma^{+}_0(p)}$ is zero which implies that the modular curve $X_0(p)$  is hyperelliptic with the hyperelliptic involution given by the Friecke involution $\omega_{p}=\begin{pmatrix}0&1\\-p&0\end{pmatrix}$ acting on the upper halfplane $\textit{H}$ (i.e. $\widehat{\omega}_{p}={\frac{1}{\sqrt{p}}\omega_{p}}={\begin{pmatrix}0&{\frac{1}{\sqrt{p}}}\\-\sqrt{p}&0\end{pmatrix}}\in{SL_2{\mathbb{R}}}$). The curve $X_0(37)$ is also hyperelliptic however, it was shown by Ogg in  ~\cite{APO78},  that its hyperelliptic involution $v$ is an exceptional one i.e. it is not determine by $\widehat{\omega}_p$.

We start with showing that, for any prime $p\geq{3}$,  each stable point of ${\widetilde{\omega}_p}:{X_0(p)}\rightarrow{X_0(p)}$ determines an ideal class $\kappa$ in $\textit{Cl}\mathcal{O}_{\mathcal{K}}$ or in ${\textit{Cl}\mathcal{O}_{\mathcal{K}}}\cup{\textit{Cl}\mathcal{O}}$ (depending whether p is conguent to $1$ or to $3$ modulo $4$ respectively, ${\mathcal{K}={\mathbb{Q}(\sqrt{-p})}}$), and conversely, any such class $\kappa$ determines a unique ramification point of ${\pi_{\omega}}:{X_0(p)\rightarrow}{\Sigma}={X_0(p)/\widetilde{\omega}_p}$. Hence, the class numbers and the Gauss conjecture restricted to primes may be expressed (see $(5.4)$) in terms of the genus g of $X_0(p)$ and the genus $\gamma$ of $\Sigma$ using the Riemann-Hurwitz theorem. It makes possible to look for the properties of the class groups by investigating the properties of the ramification points of $\pi_{\omega}$ on $X_0(p)$. 

When ${\gamma}=0$, (i.e. when $p\in{\mathcal{P}_{\mathcal{S}}}$) then  the curve $X_0(p)$  must carry a unique meromorphic function $G_p$ that is invariant under $\widetilde{\omega}_p$, has polar divisor $G^{\infty}_{p}={\{[0]_0,[\infty]_0\}}$ and is normalized such that  its lifting $G_p(\tau)$  to $\textit{H}$ has the coefficient by $q^{-1}$ (in the Fourier expansion about $i\infty$) equal to one. When $X_0(p)$ has genus zero itself then $G_p(\tau)$ is expressed  in terms of the absolute invariant $\Phi_p(\tau)$, $(3.1)$, for $\Gamma_0(p)$. Next we use the Hecke operators $[\Gamma_0(p)\omega_{p}\Gamma]_0$, ~\cite{FD05}, to transform the $\Gamma^{+}_0(p)$-invariant functions $G_p(\tau)$ into $\Gamma$-invariant functions $P_p(\tau)$. It appears that, up to a constant given by the value $P_p(\rho)$, functions $j(\tau)$ and $P_p(\tau)$ coincide. Now, the construction of $P_p(\tau)$ out of $G_p(\tau)$ provides important relations between the coefficients $c_n$ of $j(\tau)$ which are investigated in sections $3$ and $4$. It seems that all this leads to some group of hidden symetries whose order must be divided by p.

On the other hand, the fact that all $c_n$'s are integers imply that for some supersingular primes p and for positive integers $m\notin{(p)}$ we may have pure non-rational (p,m)-absolute constants. When $\Omega^p_{n}={\Omega^p_m}$ for $m,n\notin(p)$, $m\neq{n}$ than we may have an absolute (not rational) $p$-invariant. (When genus of $X_0(p)$ is zero than all ${\Omega^p_n}=0$.)  

We easily notice that when we consider relations between the coefficients  for all singular primes simultaneously then the gigantic number of relations, their complexity  and character as well as the relation $(3.18)$ may indicate an occurrence of some vertex algebra.

At the end we list emerging open (to the author's knowledge) questions.

\section{WEIERSTRASS POINTS AND CLASS GROUPS}
Let $p$ be an odd prime. For any $\tau\in{\textit{H}}$ we denote  by $[\tau]_0$   the class $\Gamma_0(p)\tau$ and by $[\tau]$  the set $\Gamma\tau$.  Let ${\widetilde{\omega}_p}:{X_0(p)}\rightarrow{X_0(p)}$ be the map induced by the Friecke involution $\omega_p=\begin{pmatrix}0&1\\-p&0\end{pmatrix}$ acting on $\textit{H}$ and let ${\mathfrak{B}_p}=(b_1,\ldots,b_B)$ be the set of the stable points of of $\widetilde{\omega}_p$ and  hence the set of the ramification points of the mapping $\pi_{\omega}:{X_0(p)}\rightarrow\Sigma$ where $\Sigma\cong{\textit{H}/\Gamma^{+}_0(p)}\cong{X_0(p)/\widetilde{\omega}_p}$.

An element ${[\tau]_0}\in{X_0(p)}$ belongs to $\mathfrak{B}_p$ if and only if we have ${\omega_{p}\tau}=A\tau$ for some $A=\begin{pmatrix}a&b\\kp&d\end{pmatrix}\in{}\Gamma_0(p)$  (without a loss of generality we may assume that $a>0$).  Since ${\tau_0}=\frac{i}{\sqrt{p}}$ satisfies $\omega_{p}\tau_0=\tau_0$ we have ${[\tau_0]_0}\in{\mathfrak{B}_p}$. For other elements,  $\tau$ must be a solution of 
\begin{equation}
ap\tau^2+(bp+kp)\tau+d=0
\end{equation}

and hence we must have $A=\begin{pmatrix}a&b\\bp&d\end{pmatrix}$  and $\tau=\frac{1}{a}(\tau_0-b)$. When $b=1$ and $a=2$ we obtain ${\tau_1}=\frac{1}{2}(\tau_0-1)$. It represents the same element of $X_0(p)$ as the $\tau$ obtained when $b={-1}$. However for $a>2$ this is no longer true.  In this case we obtain the pair of elements ${\tau_{\pm}}=\frac{1}{a}(\tau_{0}\mp{b})$, $b>0$ (with corresponding matrices $A_{\pm}=\begin{pmatrix}a&\pm{b}\\{\pm}b&\frac{p{b^2}+1}{a}\end{pmatrix}$) such that their classes  $[\tau_{+}]_0$  and $[\tau_{-}]_0$ represent distinct points of $\mathfrak{B}_p$.

 Let $\mathcal{K}={\mathbb{Q}(\sqrt{-p})}$, let ${\mathcal{O}_{\mathcal{K}}}=[1,\beta_{-p}]$ denotes its ring of integers and let ${\mathcal{O}}=[1,2\beta_{-p}]$ denote its order with  conductor $2$, (here ${\beta_{-p}}=\sqrt{-p}$ when $p\equiv{1}mod{4}$ and $\beta_{-p}=\frac{1+\sqrt{-p}}{2}$ when $p\equiv{3}mod{4}$). Let $u$ denote the reduce number of $\tau$, that is, a unique element of the standard fundamental domain $R$ of $SL_{2}\mathbb{Z}$  such that $[u]=[\tau]$.
\begin{lemma}
For odd primes each ramification point ${[\tau]_0}\in{\mathfrak{B}_p}\subset{X_0(p)}$ corresponds to a unique class ${\kappa}\in{\textit{Cl}\mathcal{O}_{\mathcal{K}}}$ when $p\equiv{1}mod{4}$ or to ${\kappa}\in{\textit{Cl}\mathcal{O}_{\mathcal{K}}}\cup{\textit{Cl}\mathcal{O}}$ when $p\equiv{3}mod{4}$
\end{lemma}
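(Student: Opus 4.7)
The plan is to attach to each ramification point $[\tau]_0\in\mathfrak{B}_p$ a positive-definite binary quadratic form (possibly imprimitive), extract its primitive part, and invoke the classical Gauss--Dedekind correspondence between proper equivalence classes of primitive positive-definite forms of discriminant $D$ and ideal classes in the order of discriminant $D$ in an imaginary quadratic field. The construction is essentially forced by the paragraph preceding the lemma: the matrix $A=\begin{pmatrix}a&b\\bp&d\end{pmatrix}$ gives the quadratic $apX^2+2bpX+d=0$ for $\tau$, whose discriminant is $(2bp)^2-4apd=4p(b^2p-ad)=-4p$, so $\tau=(-bp+\sqrt{-p})/(ap)\in\mathcal{K}$ is a CM point.

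First I would attach the binary form $Q_A(X,Y)=apX^2+2bpXY+dY^2$ of discriminant $-4p$ and compute its content $c_A=\gcd(ap,2bp,d)$. The relation $ad=1+b^2p$ precludes $p\mid d$, so $p\nmid c_A$; the only remaining possibility is $2\mid c_A$, which forces $2\mid a$ and $2\mid d$, hence $ad\equiv 0\pmod 4$ and $b^2p\equiv -1\pmod 4$. This last congruence has solutions only when $p\equiv 3\pmod 4$ (with $b$ odd). Therefore $c_A\in\{1,2\}$, and $c_A=2$ occurs only for $p\equiv 3\pmod 4$.

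Next I would read off the ideal class. When $c_A=1$, $Q_A$ is primitive of discriminant $-4p$ and, by Gauss, represents a class in the form class group of the order $\mathbb{Z}[\sqrt{-p}]$; this order coincides with $\mathcal{O}_{\mathcal{K}}$ for $p\equiv 1\pmod 4$ and with the conductor-$2$ order $\mathcal{O}$ for $p\equiv 3\pmod 4$. When $c_A=2$ (necessarily $p\equiv 3\pmod 4$), the primitive form $\tfrac12 Q_A$ has discriminant $-p$ and yields a class in $\textit{Cl}\,\mathcal{O}_{\mathcal{K}}$. These two options together exhaust the alternative stated in the lemma.

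The remaining step, which I expect to be the main obstacle, is showing that the resulting class $\kappa$ depends only on $[\tau]_0$. For a fixed $\tau$ the triple $(a,b,d)$ is uniquely determined by $\mathrm{Im}\,\tau=1/(a\sqrt{p})$ and $\mathrm{Re}\,\tau=-b/a$ together with $ad=1+b^2p$, so there is nothing to check within a single representative. For a different representative $\tau'=\gamma\tau$ with $\gamma\in\Gamma_0(p)\subset SL_{2}\mathbb{Z}$, the fact that $\omega_p$ normalizes $\Gamma_0(p)$ provides $A'=(\omega_p\gamma\omega_p^{-1})A\gamma^{-1}\in\Gamma_0(p)$, and a direct substitution shows that $Q_{A'}$ is the $\gamma$-transform of $Q_A$, hence properly $SL_{2}\mathbb{Z}$-equivalent to $Q_A$. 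The delicate point is to check that this $SL_{2}\mathbb{Z}$-action preserves the content $c_A$, so that the target group ($\textit{Cl}\,\mathcal{O}_{\mathcal{K}}$ versus $\textit{Cl}\,\mathcal{O}$) is itself an invariant of $[\tau]_0$; once this is in place the lemma follows from the standard reduction theory.
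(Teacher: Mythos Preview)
Your argument is correct and follows essentially the same route as the paper: compute the content $\gcd(ap,2bp,d)$ of the quadratic $(2.1)$, show it is $1$ or $2$ (with $2$ possible only when $p\equiv 3\pmod 4$), and then read off the discriminant $-4p$ or $-p$ to land in $\textit{Cl}\,\mathcal{O}$, respectively $\textit{Cl}\,\mathcal{O}_{\mathcal{K}}$, via the Gauss correspondence. The paper's proof is terser and omits the well-definedness check under $\Gamma_0(p)$-equivalence that you supply; your addition is a genuine improvement, though the point you call ``delicate'' (invariance of the content under the $SL_2\mathbb{Z}$-action on forms) is automatic, since an invertible integral change of variables cannot change the gcd of the coefficients.
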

\begin{proof}
First we notice that $[\tau_0]=[\sqrt{-p}]$ determines the class of the ideal $[1,\beta_{-p}]$ in $\textit{CL}\mathcal{O}_{\mathcal{K}}$ for $p\equiv{1}mod{4}$ and the class of $[1,2\beta_{-p}]$  in $\textit{Cl}\mathcal{O}$ for $p\equiv{3}mod{4}$.   Now ${u_0}={S\tau_0}=\sqrt{-p}$ is the reduced number representing this  class.  The class $[\tau_1]=[u_1]$ of $\Gamma$-equivalent elements ($u_{1}=\frac{1}{2}(u_{0}-1)$) always  determines a class in $\textit{Cl}\mathcal{O}_{\mathcal{K}}$. For other stable points of $\widetilde{\omega}_p$ we may use the fact that when the $gcd(ap,2bp,d)=1$ then the discriminant of a solution of $(2.1)$ is ${D(\tau)}=-4p$ and hence it coincides with the discriminant $\Delta_{\mathcal{K}}$ for $p\equiv{1}mod{4}$ or with the discriminant $\Delta_{\mathcal{O}}$ when $p\equiv{3}mod{4}$. When $gcd(ap,2bp,d)=l>1$ then we must have that $l=2$ and $D(\tau)=-p$. This occurs only when $p\equiv{3}mod{4}$ and then we have ${D(\tau)}={\Delta_{\mathcal{K}}}$.
\end{proof}

\begin{lemma}
Each class ${\kappa}\in{\textit{Cl}\mathcal{O}_{\mathcal{K}}}$ for $p\equiv{1}mod{4}$ and each class ${\kappa}\in{\textit{Cl}\mathcal{O}_{\mathcal{K}}}\cup{\textit{Cl}\mathcal{O}}$ for $p\equiv{3}mod{4}$ contains an  ideal ${\mathfrak{a}}=[1,\tau]$ such that ${\omega_{p}\tau}=A\tau$ for some $A\in{\Gamma_0(p)}$. Moreover,  all ideals $\mathfrak{a}$'s with the  property above which  represent a given  class $\kappa$ are given by the elements $\tau$'s which belong to a unique  $\Gamma_0(p)$-class $[\tau]_0$. 
\end{lemma}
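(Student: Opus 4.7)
The plan is to pass, in both directions, through the classical dictionary between ideal classes of the relevant quadratic order and proper equivalence classes of primitive positive-definite binary quadratic forms. For $\kappa\in\textit{Cl}\,\mathcal{O}_{\mathcal{K}}$ with $p\equiv 1\pmod 4$, or $\kappa\in\textit{Cl}\,\mathcal{O}$ with $p\equiv 3\pmod 4$, the matching discriminant is $-4p$; for $\kappa\in\textit{Cl}\,\mathcal{O}_{\mathcal{K}}$ with $p\equiv 3\pmod 4$ it is $-p$. Having produced a form $f(x,y)=Ax^2+Bxy+Cy^2$ of the correct discriminant representing $\kappa$, the strategy is to move $f$ inside its $\Gamma$-orbit until the associated $\tau=(-B+\sqrt{D})/(2A)$ fits the template read off from $(2.1)$.

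For the existence half, the key tool is the identity $4A\,f(x,y)=(2Ax+By)^2-D\,y^2$. Whenever $p\nmid A$ this gives $f(x,y)\equiv (4A)^{-1}(2Ax+By)^2\pmod p$, so the congruence $2Ax+By\equiv 0\pmod p$ supplies a primitive pair $(x_0,y_0)$ with $p\mid f(x_0,y_0)$; a $\Gamma$-substitution sending $(1,0)\mapsto(x_0,y_0)$ replaces $f$ by a $\Gamma$-equivalent form with $p\mid A$. The discriminant equation then forces $p\mid B$, and the substitution $A=ap$, $B=2bp$, $C=d$ (after absorbing a factor of $2$ in the $D=-p$ case, in the manner of the $\gcd=2$ scenario worked out in the proof of Lemma~1) turns the discriminant identity into the determinant condition $ad-b^2p=1$. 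This exhibits $A=\bigl(\begin{smallmatrix}a&b\\bp&d\end{smallmatrix}\bigr)\in\Gamma_0(p)$ with $\omega_p\tau=A\tau$, and by construction $\mathfrak{a}=[1,\tau]$ is a representative of $\kappa$.

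For the uniqueness half, suppose $[1,\tau_1]$ and $[1,\tau_2]$ both lie in $\kappa$ with $\omega_p\tau_i=A_i\tau_i$ for $A_i\in\Gamma_0(p)$. Equality of ideal classes forces $\tau_2=M\tau_1$ for some $M\in\Gamma$; the task is to improve $M$ to an element of $\Gamma_0(p)$. Combining the two $\omega_p$-relations and using that $\omega_p$ normalises $\Gamma_0(p)$ inside $GL_2^+(\mathbb{Q})$, so that $\omega_p M\omega_p^{-1}$ still represents a specified coset modulo $\Gamma_0(p)$, one sees that the isomorphism $\mathbb{C}/[1,\tau_1]\to\mathbb{C}/[1,\tau_2]$ induced by $M$ must intertwine the distinguished cyclic $p$-subgroups singled out by the $\omega_p$-invariance; that intertwining condition is exactly $M\in\Gamma_0(p)$, giving $[\tau_1]_0=[\tau_2]_0$ on $X_0(p)$.

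The main obstacle I anticipate is the bookkeeping in paragraph two when $p\equiv 3\pmod 4$ and $\kappa\in\textit{Cl}\,\mathcal{O}_{\mathcal{K}}$: one must carefully track the factor of $2$ in passing from a form of discriminant $-p$ to the $\gcd=2$ form of discriminant $-4p$ that actually fits the pattern $(ap,2bp,d)$, otherwise the two cases listed in the statement risk being conflated. In the uniqueness argument, the delicate step is identifying the cyclic $p$-subgroup cut out by the $\omega_p$-fixedness with the one that $M$ is forced to preserve; this is essentially CM theory at a prime that ramifies in $\mathcal{K}$, and it is the point where the split according to $p\pmod 4$ and the choice between $\mathcal{O}_{\mathcal{K}}$ and $\mathcal{O}$ genuinely matters.
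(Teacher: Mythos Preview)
Your existence argument via quadratic forms is a legitimate alternative to the paper's route. The paper never touches forms directly: it invokes the fact that the class polynomial $\mathcal{H}_p(X)$ (resp.\ $\mathcal{H}_{\mathcal{O}}$) occurs with multiplicity one in the diagonal modular polynomial $F_p(X,X)$, so that the reduced representative $u$ of $\kappa$ is fixed by a \emph{unique} correspondence $\alpha\in\Delta^{*}_p$, and then locates the unique coset $\beta_j$ for which the conjugate $\beta_j\alpha\beta_j^{-1}$ lands in $\Gamma\alpha_p$. Your congruence trick with $4A\,f(x,y)=(2Ax+By)^2-Dy^2$ to force $p\mid A$ (hence $p\mid B$) is cleaner and more elementary for the existence half, and the bookkeeping you flag for the $D=-p$ case is exactly the $\gcd=2$ scenario of Lemma~1, so that obstacle is surmountable.

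The uniqueness half, however, has a genuine gap. You write that the isomorphism induced by $M$ ``must intertwine the distinguished cyclic $p$-subgroups singled out by the $\omega_p$-invariance,'' but you have not shown that $\omega_p$-invariance singles out a \emph{unique} cyclic $p$-subgroup of $E=\mathbb{C}/[1,\tau]$. A priori there could be two distinct subgroups $C,C'\subset E[p]$ with both $(E,C)$ and $(E,C')$ fixed by the Fricke involution; this would give two $\Gamma_0(p)$-classes over the same $\kappa$, and your normaliser argument does nothing to exclude it. Ruling this out is exactly the content of the multiplicity-one input the paper uses: the uniqueness of $\alpha\in\Delta^{*}_p$ with $\alpha u=u$ is what guarantees a single self-$p$-isogeny (up to units), hence a single distinguished $C$. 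Without either that ingredient or an independent count matching $|\mathfrak{B}_p|$ against the class numbers, your uniqueness paragraph is assuming what it sets out to prove.
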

\begin{proof}
We use the fact that the multiplicity of the class polynomial 
\begin{equation}
\mathcal{H}_p(X)=\prod_{{\kappa}\in{\textit{Cl}\mathcal{O}_{\mathcal{K}}}}(X-j(\kappa))\in{\mathbb{Z}[X]}
\end{equation}

in the diagonal modular polynomial $G_p(X)={F_p(X,X)}$ (where $F_p(X,j)=\prod_{k=0}(X-j\circ\alpha_k)$)  is equal to one.  The same is true when $p={3}mod{4}$ for 
\begin{equation}
\mathcal{H}_{\mathcal{O}}=\prod_{\kappa\in{\textit{Cl}\mathcal{O}}}(X-j(\kappa))\in{\mathbb{Z}[X]}
\end{equation}
This means that for the (unique) reduced number $u\in{R}$ representing a class $\kappa$ there exists the unique correspondence $\alpha\in{\Delta^{*}_p}=\bigcup_{k=0}^p{\Gamma}\alpha_k$ such that $\alpha{u}=u$. Here ${\alpha_k}=\begin{pmatrix}1&k\\0&p\end{pmatrix}$ when $k=0,\ldots,p-1$ and ${\alpha_p}=\begin{pmatrix}p&0\\0&1\end{pmatrix}$. 

To prove our statement we must show that there exists a unique alement ${\tau_m}={\beta_{m}{u}}\in{\mathfrak{R}_p}$, $m\in{\{-\frac{p-1}{2},\ldots,\frac{p-1}{2}\}}$, ${\beta_m}=\begin{pmatrix}0&1\\-1&-m\end{pmatrix}=ST^{m}$,  such that ${\omega_{p}\tau_m}={A_{m}\tau_m}$ for some $A_{m}\in{\Gamma_0(p)}$.  The unique correspondence  $\alpha\in{\Delta^{*}_p}$ such that ${\alpha{u}_0}=u_0$ is $\alpha={S\alpha_0}\in{\Gamma\alpha_0}$ and we have ${\tau_0}={\beta_{0}u_0}\in{\mathfrak{R}_p}$. Similarly it is easy to see that  the class in $\textsl{Cl}\mathcal{O}_{\mathcal{K}}$ represented by $[1,u_1]$  has the reduced number $u_{1}=\frac{1}{2}(u_0+1)$ that satisfies ${\alpha}u_{1}=u_1$ for unique correspondence ${\alpha}\in{\Gamma{\alpha_{\frac{p-1}{2}}}}$. Now ${\beta_{-\frac{p-1}{2}}}u_{1}={\omega_{p}\tau_1}\in{\mathfrak{R}_p}$ and  it determines ${[\tau_1]_0}\in{\mathfrak{B}_p}$. For the remaining cases, let an ideal $[1,u]$ with $u\in{R}$ represent a class  $\kappa$, let ${\alpha}={\gamma}\alpha_m$ for some $m$ and for some $\gamma\in\Gamma$, be the unique correspondence such that $\alpha{u}=u$.  For  each $j\in{\{-\frac{p-1}{2},\ldots,\frac{p-1}{2}\}}$ and  $\tau_{j}={\beta_{j}u}$  we have 
\begin{equation*}
\widetilde{\alpha}_j{\tau_j}=\tau_j \qquad with \qquad \widetilde{\alpha}_{j}=\beta_{j}\alpha\beta_j^{-1}\in{\Delta^{*}_p}
\end{equation*}

Since the multiplicity of the class polynomials in $G_p(X)$ are equal to one all $\widetilde{\alpha}_j$ belong to distinct cosets of $\Gamma$  in $\Delta^{*}_p$ and hence there exists only one $\tau_j$ such that $\widetilde{\alpha}_{j}\in{\Gamma{\alpha_p}}$  i.e. there is unique $\tau_j$ such that $\widetilde{\alpha}_{j}={\gamma^{-1}\omega_p}$ for some ${\gamma}\in{\Gamma}$. This means that 
\begin{equation}
\omega_{p}\tau_{j}={\gamma}\tau_j 
\end{equation}

We must show that ${\gamma}=\begin{pmatrix}a&b\\c&d\end{pmatrix}$ is in fact an element of $\Gamma_0(p)$. However, from $(2.4)$, $\tau_j$ is a solution of ${pa\tau^2+(pb+c)\tau+d}=0$.  Since ideals  $[1,u]$ and $[1,\tau_j]$ belong to the same class $\kappa$ we have that $D(\tau_j)$ is equal to $-4p$  or to $-p$  appropriately  and this implies that  ${\gamma}\in{\Gamma_0(p)}$. Hence,  each class ${\kappa}\in{\textit{Cl}\mathcal{O}_{\mathcal{K}}}$ for $p\equiv{1}mod{4}$ and each class ${\kappa}\in{\textit{Cl}\mathcal{O}_{\mathcal{K}}}\cup{\textit{Cl}\mathcal{O}}$ for $p\equiv{3}mod{4}$  is associated to a unique point of $X_{0}(p)$  that is stable under $\widetilde{\omega}_p$.
\end{proof}

\begin{corollary}
There is a one-one correspondence between the set $\mathfrak{B}_p$ of the ramification points of the projection $\pi_{\omega}:X_0(p)\rightarrow{\Sigma}\cong{X_0(p)/\widetilde{\omega}_p}$  and the set of  elements of $\textit{Cl}\mathcal{O}_{\mathcal{K}}$ for $p\equiv{1}mod{4}$ or the set of elements of $\textit{Cl}\mathcal{O}_{\mathcal{K}}\cup{\textit{Cl}\mathcal{O}}$ when $p\equiv{3}mod{4}$.
\end{corollary}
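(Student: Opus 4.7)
The plan is to glue together the two preceding lemmas, which between them exhaust the content of the corollary. Lemma 1 provides a well-defined map $\Phi:\mathfrak{B}_p\rightarrow \textit{Cl}\mathcal{O}_{\mathcal{K}}$ (resp.\ $\textit{Cl}\mathcal{O}_{\mathcal{K}}\cup\textit{Cl}\mathcal{O}$ when $p\equiv 3\bmod 4$), sending a stable point $[\tau]_0$ to the class of the ideal $[1,\tau]$ in the order whose discriminant matches $D(\tau)$. Lemma 2 provides a candidate inverse $\Psi$ in the opposite direction: given $\kappa$, take the reduced $u\in R$ representing $\kappa$, form the $p+1$ points $\tau_j=\beta_j u\in\mathfrak{R}_p$, and return the unique $[\tau_j]_0\in\mathfrak{B}_p$ singled out by the requirement $\widetilde{\alpha}_j\in\Gamma\alpha_p$. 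So the task reduces to verifying that $\Phi$ is well-defined and that $\Phi$ and $\Psi$ are mutually inverse.

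First I would check well-definedness of $\Phi$: if $[\tau]_0=[\tau']_0$ then $\tau'=\gamma\tau$ for some $\gamma\in\Gamma_0(p)$, and the two ideals $[1,\tau]$, $[1,\tau']$ are related by a principal $\mathcal{O}_{\mathcal{K}}$- (or $\mathcal{O}$-) multiple, hence define the same class. The discriminant dichotomy in Lemma 1 ensures we always land in the correct order, so $\Phi$ is unambiguous.

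Next I would verify $\Phi\circ\Psi=\mathrm{id}$ and $\Psi\circ\Phi=\mathrm{id}$. The first composition is built into Lemma 2: the ideal $[1,\tau_j]$ produced there lies in the class $\kappa$ we started with, because passage from $u$ to $\tau_j=\beta_j u$ is a $GL_2(\mathbb{Q})$-transformation that preserves the $\mathcal{O}_{\mathcal{K}}$- (or $\mathcal{O}$-) class of the associated lattice. For $\Psi\circ\Phi$, start with $[\tau]_0\in\mathfrak{B}_p$, set $\kappa=\Phi([\tau]_0)$, and run Lemma 2 with its reduced representative $u\in R$ (so $u=\gamma'\tau$ for some $\gamma'\in\Gamma$). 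The uniqueness clause at the end of Lemma 2 — that exactly one among the $\widetilde{\alpha}_j$ lies in $\Gamma\alpha_p$ — together with the fact that $[\tau]_0$ itself satisfies the Fricke-stability relation $(2.4)$, forces the output of $\Psi$ to coincide with $[\tau]_0$.

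The only genuinely delicate point, and the one I would present as the heart of the argument, is this matching of the two uniqueness assertions: that the ramification point recovered by Lemma 2 is not merely \emph{some} point in $\mathfrak{B}_p$ lying over $\kappa$, but the same point we began with. This is where one has to invoke the simple-multiplicity statement for $\mathcal{H}_p(X)$ (and $\mathcal{H}_{\mathcal{O}}(X)$) in $G_p(X)=F_p(X,X)$ used inside Lemma 2; this non-trivial input prevents two distinct stable points from being collapsed to the same class under $\Phi$ and simultaneously guarantees a unique preimage under $\Psi$. Once that is in place the bijection is formal.
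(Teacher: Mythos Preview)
Your proposal is correct and follows exactly the line the paper intends: the corollary is stated without a separate proof, as the immediate consequence of Lemma~1 (defining $\Phi$) and Lemma~2 (defining $\Psi$ and supplying the uniqueness via the simple-multiplicity of $\mathcal{H}_p$ in $G_p(X)$). Your write-up merely makes explicit the well-definedness and mutual-inverse checks that the paper leaves implicit, so there is no divergence in approach.
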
 

This means that, when a prime p belongs to $\mathcal{P}_{\mathcal{S}}$, the number of elements in $\textit{Cl}\mathcal{O}_{\mathcal{K}}$ or in $\textit{Cl}\mathcal{O}_{\mathcal{K}}\cup{\textit{Cl}\mathcal{O}}$ is determined exactly by the number of the Weierstrass points of $X_0(p)$.  For  a general  prime p it is described by the Riemann-Hurwitz theorem (when the genus of $\Sigma$ is greater than zero). In other words we have
\begin{equation}
|\mathfrak{B}_{p}|=\textit{h}_{\mathcal{K}}+\textit{h}_2, \quad p\equiv{3}mod{4}\quad and \quad |\mathfrak{B}_p|=\textit{h}_{\mathcal{K}}, \quad p\equiv{1}mod{4}
\end{equation}

where $\textit{h}_{\mathcal{K}}=|\textit{Cl}\mathcal{O}_{\mathcal{K}}|$,  $\textit{h}_{2}=|\textit{Cl}\mathcal{O}|$ and $|\textsl{S}|$ denotes the cardinality of a set $\textsl{S}$. 

When the genus of ${\Sigma}=\textit{H}/\Gamma^{+}_0(p)$ is zero then the Riemann surface $X_0(p)$ is hyperelliptic with hyperelliptic involution corresponding to the Atkin-Lehner involution.  Thus all points of $\mathfrak{B}_p$ are exactly the $2g+2$ Weierstrass points on this surface. A set of representatives of these points, for $p\geq3$ contains $\tau_0$, $\tau_1$ and additional $2g$ points when the genus of $X_0(p)$ is $g\geq1$. We may choose these points as given by
\begin{equation}
\tau_{\pm}=\frac{1}{a}(\tau_{0}\mp{b})\qquad with \qquad 0<b<\frac{a}{2},\qquad 2<a<2\sqrt{\frac{p}{3}}
\end{equation}

and hence the reduced numbers associated to points of $\mathfrak{B}_p$ are given by
\begin{equation*}
u_{\pm}=\frac{1}{a}(u_{0}\pm1)\quad for\quad b=1, \qquad u_{\pm}=\frac{1}{a}(u_{0}\mp{r}) \quad for \quad a=br+1,\quad b\geq2
\end{equation*}
and
\begin{equation*}
u_{\pm}=\frac{1}{a}(u_{0}\pm{r}) \quad for \quad a=br-1
\end{equation*}

The genus of $X_0(P)$ is zero for $p=2,3,5,7,13$ and hence the set $\mathfrak{B}_p$ for these primes contains only two points: $\mathfrak{B}_{p}={\{[\tau_0]_{0},[\tau_1]_{0}\}}$ for $p\neq{2}$ and $\mathfrak{B}_{2}={\{[\frac{i+1}{2}]_{0},[\frac{i}{\sqrt{2}}]_{0}\}}$ (with the reduced numbers $i$ and $i\sqrt{2}$ respectively).  For other primes  in $\mathcal{P_{\mathcal{S}}}$ the  elements $\tau$'s representing the remaining $2g$, $\widetilde{\omega}_p$-stable  points of $X_0(p)$,  may be easily  found using the formulae $(2.6)$. It is easy to determine which of these $\tau$'s represent an ideal class in $\textit{Cl}_{\mathcal{K}}$ and which represents a class of $\textit{Cl}\mathcal{O}$. We collect some results for the odd primes $p\in{\mathcal{P}_{\mathcal{S}}}$ in the table above.
\begin{table}[t]
\begin{center}
\begin{tabular}{|c||c|c|c|c|c|c|c|c|c|c|c|c|c|c|}
\hline
$p$ & 3&5&7&11&13&17&19&23&29&31&41&47&59&71\\
\hline
$g(X_0(p))$ &0&0&0&1&0&1&1&2&2&2&3&4&5&6\\
\hline
$h_{\mathcal{K}}$ &1&2&1&1&2&4&1&3&6&3&8&5&3&7\\
\hline
$h_2$ &1&&1&3&&&3&3&&3&&5&9&7\\
\hline
$2g+2$ &2&2&2&4&2&4&4&6&6&6&8&10&12&14\\
\hline
\end{tabular}
\end{center}
\end{table}

\section{FUNCTION $P_p$}

\subsection{A genus zero case}

When $p=2,3,5,7,13$ the genus of the modular curve $X_0(p)$ is zero and the rational function field $\mathbb{C}(X_0(p))$ is given by the absolute invariant 
\begin{equation}
\Phi_p(\tau)=(\frac{\Delta(p\tau)}{\Delta(\tau)})^{\frac{1}{p-1}}=(\frac{\eta(p\tau)}{\eta(\tau)})^r, \qquad r=\frac{24}{p-1}
\end{equation}

that is $\mathbb{C}(X_0(p))=\mathbb{C}(\Phi_p(\tau))$. The absolute invariant $\Phi_p$ satisfies  $\Phi_p(\omega_{p}\tau)=p^{-\frac{r}{2}}\Phi_p^{-1}(\tau)$, ~\cite{JHS94},   and the  series expansions of these functions at $i\infty$ are:
\begin{equation}
\Phi_p(\tau)=q[1+\sum_{n=1}^{\infty}b_{n}q^n],\qquad b_{n}\in{\mathbb{Z}},\quad q=e^{2\pi{i}\tau} 
\end{equation}
and
\begin{equation}
\Phi_p(\omega_{p}\tau)=p^{-\frac{r}{2}}[q^{-1}+\sum_{n=0}^{\infty}a_{n}q^n], \qquad a_{n}\in{\mathbb{Z}}
\end{equation}

Two points of $X_0(p)$ that are stable under $\widetilde{\omega}_p$ are $[\tau_{0}={\frac{i}{\sqrt{p}}}]_0$ and $[\frac{1}{2}(\tau_0-1)]_0$ when  $p\neq{2}$ or $[\tau_{1}=\frac{1}{2}(i-1)]_0$ when $p=2$. Moreover we must have 
\begin{equation*}
\Phi_p(\tau_0)=-\Phi_p(\tau_1) \qquad with \quad \Phi_p(\tau_i)=\pm{p^{-\frac{r}{4}}},\quad i=0,1
\end{equation*}

Since the genus of $X_0(p)/\widetilde{\omega}_p$ is zero there exists a meromorphic function $G_p$ of  degree two on $X_0(p)$ with the polar divisor $\{{\infty}_{1},{\infty}_2\}={\{[0]_{0},[\infty]_0\}}$ such that the following diagram commutes.
\[
\xymatrix{
X_0(p)\ar[r]^{\widetilde{\omega}_p} \ar[d]^{G_p}
&X_0(p)\ar[ld]^{G_p}\\
\widehat{\mathbb{C}}&\\
Diag.1 &\\
}\]

The lifting $G_p(\tau)$ of the function $G_p$ to $\textit{H}$ must satisfy
\begin{equation}
G_p(\omega_{p}\tau)=G_p(\tau),\qquad G_p(\gamma\tau)=G_p(\tau),\quad \forall{\tau\in{\textit{H}}},\quad \forall{\gamma\in{\Gamma_0(p)}}
\end{equation}

so the function $G_p(\tau)$ is an automorphic function of $\Gamma^{+}_0(p)$.  The polar divisor of $G_p$  tells us that the series expansion of its lifting $G_p(\tau)$  about $i\infty$ is
\begin{equation}
G_p(\tau)=q^{-1}+G^{+}_p(q); \qquad G^{+}_p(q)=\sum_{n=0}^{\infty}\alpha^p_{n}q^n
\end{equation}

where we have normalized $G_p(\tau)$ such that the coefficient by $q^{-1}$ is one. We observe that the polar divisor of the function $\Phi_p(\tau)+\Phi_p(\omega_{p}\tau)$ is the same as the polar divisor of $G_p(\tau)$ and hence $(3.2)$, $(3.3)$ and $(3.5)$ imply that
\begin{equation}
G_p(\tau)= p^{\frac{r}{2}}[\Phi_p(\tau)+\Phi_p(\omega_{p}\tau)]=q^{-1}+\sum_{n=0}^{\infty}\alpha^p_{n}q^n
\end{equation}
with
\begin{equation}
\alpha_{0}=a_{0}=-b_1,\quad \alpha_{1}=p^{\frac{r}{2}}+a_1,\quad \alpha_{n}=p^{\frac{r}{2}}b_{n-1}+a_n,\quad n\geq{2}
\end{equation}

Now, any $\Gamma_0(p)$-automorphic function determines a $\Gamma$-automorphic function by applying an appropriate Hecke operator. In other words, we will  define the function $P_p(\tau)\in{\mathcal{A}_0(\Gamma)}=\mathbb{C}(j)$ as the image of $G_p(\tau)$ under the Hecke operator ~\cite{FD05}
\begin{equation}
[\Gamma_0(p)\omega_{p}\Gamma]_{0}: {\mathcal{A}_0(\Gamma_0(p))}\rightarrow{\mathcal{A}_0(\Gamma)}
\end{equation}
that is
\begin{equation}
P_p(\tau)=[\Gamma_0(p)\omega_{p}\Gamma]_{0}G_p(\tau)=\sum_{k=0}^{p}G_p(\omega_{p}\gamma_{k}\tau)=\sum_{k=0}^{p}G_p(\gamma_{k}\tau)
\end{equation}

where $\omega_{p}\gamma_{k}=\begin{pmatrix}-1&-k\\0&-p\end{pmatrix}$ for $k=0,1,\ldots,p-1$ and $\omega_{p}\gamma_{p}=\begin{pmatrix}0&-1\\p&0\end{pmatrix}$. We may rewrite the last formula as
\begin{equation}
P_p(\tau)=G_p(\tau)+\sum_{k=1}^{p-1}G_p(\frac{\tau+k}{p})
\end{equation}

Using expansions $(3.2)$ and $(3.3)$ we obtain
\begin{equation}
P_p(\tau)=j(\tau)-j(\widehat{u}_p)=q^{-1}+\alpha^p_0(p+1)+\sum_{n=1}^{\infty}(\alpha^p_n+p\alpha^p_{np})q^n
\end{equation}

where $\widehat{u}_p$ is the unique reduced number such that $j(\widehat{u}_p)=-P_p(\rho)$.  Thus we have 
\begin{equation}
P_p(\tau)=j(\tau)+P_p(\rho) \qquad and \quad P_p(\rho)=a^p_0(p-1)-744
\end{equation}

The vanishing of $P_p(\tau)$ at $\widehat{u}_{p}\in{R}\subset{R_p}$ implies that we have 
\begin{equation*}
G_p(\widehat{u}_p)=-\sum_{k=0}^{p-1}G_p(\frac{\widehat{u}_{p}+k}{p})=-\sum_{k=0}^{p-1}G_p(\gamma_{k}\widehat{u}_p)
\end{equation*}

That means that the value of $G_p$ at any one point of the  set of $p$ points on $X_0(p)$  determined by ${\{\beta_{k}\widehat{u}_p,k=\frac{-p+1}{2},\ldots,\frac{p-1}{2}\}}\subset{\mathfrak{R}_p}$ is exactly the negative sum of the values of $G_p$ at the remaining points.

  We observe that using $(3.10)$ and $(3.11)$ we may view the absolute $\Gamma$ invariant $j(\tau)$ on its standard fundamental domain $R$  as given (up to the additive constant ${P_p(\rho)}=-j(\widehat{u}_p)$) by the value of $G_p(\tau)$ at $\tau\in{R}$  plus the fundamental symmetric function $\sigma_1(G_p(\gamma_{0}\tau),\ldots,G_p(\gamma_{p-1}\tau))$ of the values of $G_p(\tau)$  at all points  that are $\Gamma$-equivalent to ${\tau}\in{R}$ but represent all the remaining, distinct $\Gamma_0(p)$-classes.

\subsection{A genus $g\geq{1}$ case}
We have already mentioned that when the genus of $X_0(p)$ is greater than two but the genus of $X_0(p)/\widetilde{\omega}_p$ is zero then $X_0(p)$ must be a hyperelliptic Riemann surface (we recall that any surface of genus two is already a   hyperelliptic one). The set $\mathfrak{W}_p$ of $2g+2$ Weierstrass points of $X_0(p)$ coincides with the set $\mathfrak{B}_p$ of the stable points of $\widetilde{\omega}_{p}:X_0(p)\rightarrow{X_0(p)}$ and with the set of elements  of $\textit{Cl}\mathcal{O}_{\mathcal{K}}$ when $p\equiv{1}mod{4}$ or with the set  $\textit{Cl}\mathcal{O}_{\mathcal{K}}\cup{\textit{Cl}\mathcal{O}}$ when  $p\equiv{3}mod{4}$. Since we have found all Weierstrass points  in the previous section we know that infinity is not one of them and hence we may introduce the function $G_p$ as follows.  Let $G_p$ be a unique (up to multiplicative constant) a degree two meromorphic function on $X_0(p)$ satisfying the properties of the commutative Diag.1 and whose polar divisor is ${\{\infty_1,\infty_2\}}={\{[0]_{0},[\infty]_0\}}$. Of course, its lifting $G_p(\tau)$ to $\textit{H}$ satisfies the conditions $(3.4)$  for $p=11,17,19,23,29,31,41,47,59,71$ and we normalize this function such that its Fourier expansion aroud $i\infty$ has a form
\begin{equation}
G_p(\tau)=q^{-1}+\sum_{n=0}^{\infty}\alpha^p_{n}q^n, \qquad q=e^{2\pi{i}\tau}
\end{equation} 

Similarly as in the genus zero case we use the Hecke operators $[\Gamma_0(p)\omega_{p}\Gamma]_0$ to transform $G_0(\tau)\in{\mathcal{A}_0(\Gamma_0(p))}$ into a $\Gamma$-invariant function
\begin{equation}
[\Gamma_0(p)\omega_{0}\Gamma]_{0}G_p(\tau)=\sum_{m=0}^{p}G_p(\omega_{p}\gamma_{m}\tau)\in{\mathcal{A}_0(\Gamma)}=\mathbb{C}(j)
\end{equation}

Similarly as before we will denote this image as $P_p(\tau)$. Thus we have
\begin{equation}
P_p(\tau)=G_p(\tau)+ \sum_{k={-\frac{p-1}{2}}}^{\frac{p-1}{2}}G_p(\beta_{k}\tau)=G_p(\tau)+\sum_{m=0}^{p-1}G_p(\frac{\tau+m}{p})
\end{equation}

whose Fourier expansion around $i\infty$ is 
\begin{equation}
P_p(\tau)= {q^{-1}+\alpha^p_{0}(p+1)+ \sum_{n=1}^{\infty}(\alpha^p_n+p\alpha^p_{np})q^n}\in\mathbb{C}(j)
\end{equation}

We see immediately that 
\begin{equation}
P_p(\tau)=j(\tau)+const \quad with\quad {const}=P_p(\rho)
\end{equation}

There exists unique element $\widehat{u}_p$ in the fundamental domain $R$ of $\Gamma$ such that $P_p(\rho)=-j(\widehat{u}_p)$ and hence we may rewrite the last formula as 
\begin{equation}
P_p(\tau)=j(\tau)-j(\widehat{u}_p)=j(\tau)+P_p(\rho)
\end{equation}

Hence
\begin{equation}
P_p(\widehat{u}_p)=0 \qquad and \quad P_p(\rho)=\alpha^p_{0}(p+1)-744 
\end{equation}

The first equality gives us 
\begin{equation}
G_p(\widehat{u}_p)=-\sum_{k={-\frac{p-1}{2}}}^{\frac{p-1}{2}}G_p(\beta_{k}\widehat{u}_p)
\end{equation}

that is, the value of $G_p(\tau)$ at $\widehat{u}_p$ is equal to the negative sum of the values of $G_p(\tau)$ at the remaining points of  $R_p-R$ that are $\Gamma$-equivalent  to $\widehat{u}_p$. Now, from $(3.16)$ and  $(3.18)$ we obtain
\begin{equation}
j(\tau)=P_p(\tau)+j(\widehat{u}_p)=q^{-1}+744+\sum_{n=1}^{\infty}c_{n}q^n
\end{equation}

and hence
\begin{equation}
744=\alpha^p_{0}(p-1)-P_p(\rho) \qquad and \quad c_{n}=\alpha^p_{n}+p\alpha^p_{np}
\end{equation}

\section{HIDDEN SYMMETRIES}
In the previous section we have found some relations between the coefficients $c_n$ of $j(\tau)$ (for $n\geq1$) and the coefficients $\alpha^p_n$ of the $\Gamma^{+}_0(p)$ invariant functions $G_p(\tau)$,   $p\in{\mathcal{P}_{\mathcal{S}}}$.  Namely, we have found that $c_n={\alpha^p_n+p{\alpha^p_{np}}}$. Suppose that we know all functions $G_p(\tau)={q^{-1}+\sum_{k=0}^{\infty}\alpha^p_k{q^k}}$ (i.e. we know all  $\alpha^p_k$'s). However, before we will investigate the consequences of these relations   let us look at the p-decompositions of the positive integers for a fixed prime p. Let ${n=\sum_{i=0}^N{a_i{p^i}}}\Leftrightarrow{(a_0,a_1,\ldots,a_N,0,\ldots)}$  with all coefficients ${a_i}\in{\{0,1,\ldots,p-1\}}$.  On $\mathbb{Z}_{\geq{0}}$ we may define the following two operations  $F_p$ and $\sigma_p$ as given by
\begin{equation*}
F_p(n)=pn\Leftrightarrow(0,a_0,\ldots,a_N,0,\ldots)\quad and \quad \sigma_p(n)=(\overline{a_0+1},a_1,\ldots,a_N,0,\ldots)
\end{equation*}

where ${\overline{a_0+1}}=(a_0+1)mod{p}$.  Thus, any positive integer n can be uniquely written as 
\begin{equation*}
n=\sigma_p^{a_0}\circ{F_p}\circ{\sigma_p^{a_1}}\circ{F_p}\circ{\ldots}\circ{F_p}\circ{\sigma_p^{a_N}}(0)
\end{equation*}

Let us fix $k>0, 0<l<p$. The action of the commutator $H^{k,l}_{p}:=[F_p^k,\sigma_p^l]$ on $n$ depends only on the value of $n{mod}p$. More precisely
\begin{equation*}
H^{k,l}_p(n)=-l+(\overline{a_0+l}-a_0)p^k
\end{equation*}

So, for example, for a positive $n\in(p)$ we have
\begin{equation*}
[F_p^k,\sigma^l_p](n)=l(p^{k}-1)=(l-1)p^{k}+(p-1){\sum_{m=0}^{k-1}p^m}\in{\mathbb{Z}^{+}}
\end{equation*}

and hence the operations $H^{k,l}_{p}$ transfer any positive element of the ideal $(p)$ into a positive integer out of the ideal. For   all remaining positive integers with $n{mod}{p}\neq{0}$ the expresion ${\overline{a_{0}+l}-a_0}$
is negative whenever ${a_{0}+l}\geq{p}$ and the $H^{k,l}_p$ image of such $n$ must also be  negative.   When $(a_{0}+l)<p$ then $n$ has a positive image $H^{k,l}_{p}(n)=l(p^{k}-1)$ which is exactly the same for all  integers $n$'s whose non-zero coefficient $a_0$ satisfies $(a_{0}+l)<p$. In all cases the image of $n\in{\mathbb{Z}^{+}}$ is not in $(p)$ i.e.  $H^{k,l}_p[\mathbb{Z}^{+}]\cap(p)=\emptyset$.

For each $n\in{\mathbb{Z}^{+}}$ the operations $F^k_p$  produce an infinite sequence $\mathcal{S}(n):=\{F^k_p(n), k\geq{0}\}$ (which we will write horizontally). If $n\notin{(p)}$ then the sequence $\mathcal{S}(n)$ is maximal but for any ${m=p^{r}n}\in(p)$ we have $\mathcal{S}(m)\subset{\mathcal{S}(n)}$. 

Operation $\sigma_p$ acting on $\mathbb{Z}_{\geq{0}}$  satisfies $\sigma_p^{p}=Id$. It connects the first terms of p distinct infinite horizontal $F_p$-sequences  and at least $p-1$ of them have to be maximal. For this reason we may view $\sigma_p$ as a transformation that moves a sequence $\mathcal{S}(n)$ with $n\Leftrightarrow(a_0,a_1,\ldots)$ into an $F_p$-sequence $\mathcal{S}(t)$ with $t\Leftrightarrow(\overline{a_{0}+1},a_1,\ldots,a_N,0,\ldots)$. In this way after p steps we will return to our original sequence $\mathcal{S}(n)$ (we will write such operation $\sigma_p$ vertically).  Thus we have arranged all non-negative integers in some sort of two dimensional ``step'' sequenses with the ``hight'' of each step equal to p. Actually, we merely expose the structure of the positive rational integers inside of the p-adic tree of the p-adic integers $\mathbb{Z}_p$.

Let us return to the coefficients $c_n$ of $j(\tau)$. The formulae $(3.11)$ and $(3.22)$ imply that for each $k\geq{1}$ we have
\begin{equation}
p^{k-1}c_{p^{k-1}n}-p^{k-2}c_{p^{k-2}n}+\ldots+(-1)^{k-1}c_{n}={p^k{\alpha^p_{p^k{n}}}+(-1)^{k-1}\alpha^p_n}
\end{equation}

This means that any infinite set of the coefficients $c_n$'s whose indexes belong to a horizontal, maximal $F_p$-sequence $\mathcal{S}(m)$ of integers are uniquely determined by the element $c_m$. We may introduce the operation $f_p$ which  corresponds to the operation $F_p$ as follows:
\begin{equation}
f_p:c_n\rightarrow{c_{pn}}, \qquad f_p(c_n)=c_{pn}=\frac{1}{p}c_n+(p\alpha^p_{p^2{n}}-\frac{\alpha^p_n}{p})
\end{equation} 

We see that for any $p\in{\mathcal{P}_{\mathcal{S}}}$ the coefficients of $j(\tau)$ form some kind of a graded $\mathbb{Z}/p\mathbb{Z}$ structure such that the operation $f_p$ produces an infinite, horizontal  $f_p$- sequences of coefficients whose all terms are determined by the first one.  On the contrary to $F_p$, the operation $\sigma_p$ does not induces any numerical formula between the coefficients $c_n$ and $c_{\sigma(n)}$. However, when we write $n=a_{0}+m$, $a_{0}={n}mod{p}$,  we may introduce the formal operation $\widehat{\sigma}_p$, $\widehat{\sigma}^p_{p}={Id}$, wich produces the following ``path'' of the length p of infinite $f_p$-sequences:
\begin{equation*}
\{f_p^k(c_n)\}_{k=0}^{\infty}\stackrel{\widehat{\sigma}_p}{\rightarrow}\{f_p^k(c_{(\overline{a_0+1}+m)})\}_{k=0}^{\infty}\stackrel{\widehat{\sigma}_p}{\rightarrow}\ldots\stackrel{\widehat{\sigma}_p}{\rightarrow}\{f^k_p(c_n)\}_{k=0}^{\infty}
\end{equation*}

Since $n=0$ does not generate an infinite $F_p$-sequence, $F_p(0)=0$, the same is true for $c_0$. This means that the role of the free coefficient $c_0$ in $j(\tau)$ is a distinguished one. In fact, the earlier obtained relations 
\begin{equation*}
c_{0}=\alpha^p_0(p-1)-P_p(\rho) \qquad and \quad j(\tau)=P_p(\tau)-P_p(\rho),\quad \tau\in{\textit{H}}
\end{equation*}

tell us about some connection between $c_0$ and the point ${\tau}={\rho}\in{\textit{H}}$ that is  associated to  each $p\in{\mathcal{P}_{\mathcal{S}}}$. It also tells us about a connection  between the whole function $j(\tau)$ and the torus determined by the lattice $L_{\rho}$. We have seen in ~\cite{KMA10} and ~\cite{KMB10} that this connection is very important  and a.o. manifests itself by the production of a generating matrix for the binary error correcting Golay code $\textit{G}_{24}$ out of the properties of the Veech modular curve $\textbf{T}^{*}={\textit{H}/\Gamma'}\cong{\mathbb{C}-L_{\rho}}/L_{\rho}$.

Summarizing, we see that while both operations $F_p$ and $\sigma_p$ on $\mathbb{Z}_{\geq{0}}$ are given by a concrete rules between integers, only the operation $f_p$ (inherited from $F_p$) is given by a concrete formula.  Each $p\in{\mathcal{P}_{\mathcal{S}}}$ arranges the set $\{c_{n}, n\geq{0}\}$ as  step, graded sequences whose all horizontal terms are uniquely determined by the first coefficient  of the sequence, whose steps have lenght equal to $p$ and (for a fixed p) are not related to each other by any numerical formula. Moreover, the coefficient $c_0$ possess a distinquish role in this presentation.

We have seen that for all primes $p\in{\mathcal{P}_{\mathcal{S}}}$ the image $P_p(\tau)$ of each $G_p(\tau)$ under the Hecke operator $[\Gamma_0(p)\omega_{p}\Gamma]_0$ coincide with the sum of the absolute invariant $j(\tau)$ and the value of the function $P_p(\tau)$ at the distinguish point $\rho$. This determines the unique point ${[\widehat{u}_p]_0}\in{X_0(p)}$,  ${\widehat{u}_p}\in{R}\subset{R_p}$, that has the property 
\begin{equation*}
G_p(\widehat{u}_p)=-\sum_{k=0}^{p-1}G_p(\frac{\widehat{u}_{p}+k}{p})
\end{equation*} 

Moreover, for each such  prime (and only for those primes) we may write the Klein modular function $j(\tau)$ as 
\begin{equation}
j(\tau)=G_p(\tau)+\sigma_1(\{G_p(\frac{\tau+k}{p}),k=0,\ldots,p-1\})+j(\widehat{u}_p)
\end{equation}

where $\sigma_1$ is the standard symmetric function of $p$ variables. Let us rewrite the last formula $(4.3)$ as follows
\begin{equation}
j(\tau)=G_p(\tau)+\sum_{r\in{\mathcal{C}_p}}G_p(\frac{\tau}{p}+r)+const,\qquad \tau\in{\textit{H}}
\end{equation}

where $\mathcal{C}_{p}=\left\langle \frac{1}{p}+L_{\frac{\tau}{p}}\right\rangle$ is a cyclic subgroup of $p-points$ of the lattice $[1,\frac{\tau}{p}]$. This illustrates the fact that the modular invariant $j(\tau)$ has a hidden, p-cyclic symmetries wich are associated to the modular pairs $(L_{\frac{\tau}{p}},\mathcal{C}_p)$ stabilized by $\Gamma_0(p)$. So, the expression of the Klein modular function $j(\tau)$ in terms of the $\Gamma^{+}_0(p)$ invariant functions $G_p(\tau)$  using the appropriate Hecke operators $[\Gamma_0(p)\omega_{p}\Gamma]_0$ exhibits a hidden p-cyclic symmetry which occurs in both, in the formula $(4.4)$ and in the arrangement of the coefficients $c_n$'s as p-step, graded sequences. 

We notice that the formula $(3.18)$ for $P_p(\tau)$ immediately relates this function to the denominator identities of the Monster Lie algebra discovered by Borcherds, ~\cite{REB92}, and others. These   facts together with the relations (obtained when we consider all $p\in{\mathcal{P}_{\mathcal{S}}}$ simultaneously) between the coefficients $c_{n}\in{\{f^k_p(c_m)\}_{k=0}^{\infty}}$ and $c_{\sigma^l_{p}n}\in{\{f^k_{p_1}(c_{m_1})\}_{k=0}^{\infty}}$ for $p\neq{p_1}$ indicates that the representation of the full hidden symmetry of $j(\tau)$ may be given by some sort of a vertex operator algebra.

\section{CONCLUSIONS}
In all three papers, in ~\cite{KMA10}, ~\cite{KMB10} and in the present one we observe a particular relationship between the modular curve $Y_0(1)={\textit{H}/\Gamma}$ and the curve $\mathcal{E}:t^{2}=4u^{3}-1$ analytically isomorphic to the Veech modular curve $\textbf{T}^{*}={\textit{H}/\Gamma'}\cong{\mathbb{C}-L_0}/L_0$ with $L_{0}\cong{L_{\rho}}$, ${\Gamma'}=[\Gamma,\Gamma]$. As we have already mentioned, the properties of the Veech modular curve $\textbf{T}^{*}$  produce a generating matrix for the Golay code $\textit{G}_{24}$ whose full automorphism group is given by the Matieu group $\mathcal{M}_{24}$. Hence, $\mathcal{M}_{24}$ must be a subgroup of the full group of hidden symmetries associated to the function $j(\tau)$. In this paper we indicate that for each $p\in{\mathcal{P}_{\mathcal{S}}}$ function $j(\tau)$ exhibits a hidden p-cyclic symmetry coming from the relations between $j(\tau)$ and $\Gamma^{+}_0(p)$-invariant functions $G_p(\tau)$'s and described in the previous section. Hence, the the full group of hidden symmetries associated to $j(\tau)$ contains $\mathcal{M}_{24}$ and has the order wich is devided by all $p\in{\mathcal{P}_{\mathcal{S}}}$.  If this group is a simple one it has to be the monster group $\textbf{M}$.  

The particular role of the point ${\rho}\in{\textit{H}}$ revealed in $(3.12)$ and $(3.18)$  is supported by the mentioned above results of ~\cite{KMA10}, ~\cite{KMB10} as well as by the results of Harada and Lang in ~\cite{HL89}. They have shown that the all five curves associated to some special conjugacy classes of the Conway group $.O$ of all automorphisms of the Leech lattice $\Lambda$ (which,by the way, originates with $\textit{G}_{24}$) are elliptic curves that represent the Riemann surface $\textbf{T}^{*}$, that is, all these curves have the j-invariant  equal to zero.

In our considerations we have assumed that all meromorphic functions $G_p$ on $X_0(p)$ are known and we have used them to describe relationships between the coefficients $c_n$, $n\in{\mathbb{Z}^{+}}$ of $j(\tau)$. However, as to the author's knowledge, except for $p=2,3,5,7$ and $13$, when $G_p(\tau)$ can be written in terms of the absolute invariants $\Phi_p$ as in $(3.6)$, it is not a case. So, let us reverse the situation. Let us try to find properties of the coefficients $\alpha^p_n$'s for $p=11,17,\ldots,71$ and $n\in{\mathbb{Z}^{+}}$. Since $c_{n}\in{\mathbb{Z}^{+}}$ we may assume  that $\alpha^p_{n}={\Omega^p_n+g^p_n}$ where $g^p_{n}\in{\mathbb{Q}}$ and $\Omega^p_n$ is either zero or pure not-rational. When the genus of $X_0(p)$ is zero than all ${\Omega^p_n}=0$ and all $\alpha^p_n$ are integers.  For the remaining $p\in{\mathcal{P}_{\mathcal{S}}}$ it may not be so. However, we immediatelly obtain that
\begin{equation}
\Omega^p_{pn}=-\frac{\Omega^p_n}{p} \qquad and \quad \Omega^p_{p^{k}n}={(-1)^k}\frac{\Omega^p_n}{p^k}
\end{equation}

Hence, for any $n=p^{l}m$ with $p\nmid{m}$ i.e. for any $n\in{\{F^k_p(m)\}_{k=0}^{\infty}}$ we have
\begin{equation}
\Omega^p_{p^{l}m}=(-1)^{l}\frac{\Omega^p_m}{p^l}\rightarrow{0} \qquad as \quad l\rightarrow{\infty}
\end{equation} 

If ${\Omega^p_m}\neq{0}$, ( $p\nmid{m}$) then we will call it the universal $(p,m)$-invariant or the universal $(p,m)$-constant. Now, what can we find about the pure rational part $g^p_n$ of $\alpha^p_n$?  Let $g^p_{n}=\frac{a_n}{b_n}$ with $gcd(a_n,b_n)=1$  and with the obvious upper index $p$ omitted. From $(3.22)$ we obtain $a_{n}b_{pn}+pa_{pn}b_{n}=b_{n}b_{pn}c_n$  and  hence $b_n|b_{pn}$  and  $b_{pn}|pb_n$. For a more general case let us rewrite the formulw $(4.1)$ as
\begin{equation}
p^{k}\alpha^p_{p^{k}n}+(-1)^{k+1}\alpha^p_{n}=p^{k-1}c_{p^{k-1}n}-p^{k-2}c_{p^{k-2}n}+\ldots+(-1)^{k-1}c_n
\end{equation} 

and let us denote the left side of of this equality by $A^{p,k}_n$. Since all of the $\Omega$-terms will cancel with each other we will end with
\begin{equation*}
a_{n}b_{p^{k}n}+(-1)^{k+1}p^{k}b_{n}a_{p^{k}n}=b_{n}b_{p^{k}n}A^{p,k}_n
\end{equation*}

Similarly as before, for any $k\geq{1}$, we must have $b_{n}|b_{p^{k}n}$ and $b_{p^{k}n}|p^{k}b_n$. There are (a.o.) two simple possibilities: either we have $b_{pn}={pb_n}$ for $n\in{\mathbb{Z}^{+}}$ (and hence $b_{p^{k}n}=p^{k}b_n$)  or we have $b_{pn}=b_n$. In the former case we would obtain 
\begin{equation*}
G_p(\tau)=q^{-1}+(\Omega^p_{0}+g^p_0)+\sum_{n\notin{(p)}}{\sum_{k=0}^{\infty}[\frac{\Omega^p_n}{p^k}+\frac{a_{p^{k}n}}{p^{k}b_n}]q^{p^{k}n}}
\end{equation*}

and in the latter one
\begin{equation*}
G_p(\tau)={q^{-1}+(\Omega^p_{0}+g^p_0)+\sum_{n\notin{(p)}}{\sum_{k=0}^{\infty}[\frac{\Omega^p_n}{p^{k}}+\frac{a_{p^{k}n}}{b_n}]q^{p^{k}n}}}
\end{equation*}

which in a case when all $\Omega^p_n$ are equal to zero becomes
\begin{equation*}
G_p(\tau)={q^{-1}+g^p_{0}+\sum_{n\notin{(p)}}\frac{1}{b_n}{\sum_{k=0}^{\infty}a_{p^{k}n}q^{p^{k}n}}}
\end{equation*}

In principle, there is nothing in the way for  both of the above cases to occur. We may also have a mixed situation when , for example, $b_{p^{3}n}=pb_{p^{2}n}$ and $b_{p^{2}n}=b_{pn}=b_n$, etc. (All of the division conditions stated above can be, in such mixed cases,  fullfiled as well.) 

Let us say something about  the section $2$.  We have shown that the class number $\textit{h}_{\mathcal{K}}$ for $p\equiv{1}mod{4}$ and $\textit{h}_{\mathcal{K}}+\textit{h}_2$ for $p\equiv{3}mod{4}$ is given by the total branch number B of the mapping $\pi_{\omega}: X_0(p)\rightarrow{\Sigma}\cong{X_0(p)/\widetilde{\omega}_p}$. This means that it is ruled by the Riemann-Hurwitz formula which, in our case, becomes $g={2\gamma-1+\frac{B}{2}}$ where g is the genus of $X_0(p)$,  $\gamma$ denotes the genus of $\Sigma$ and $B={\sum_{x\in{X_0(p)}}b(x)}$ is always even. (For any $x\in{X_0(p)}$ the ramification number is $r(x)=b(x)+1$ with $\sum_{x\in{\pi^{-1}_{\omega}(\varsigma)}}r(x)=2$ for each $\varsigma\in{\Sigma}$). Thus we have
\begin{equation}
\textit{h}_{\mathcal{K}}={2g+2-4\gamma} \quad and \quad {\textit{h}_{\mathcal{K}}}+{\textit{h}_2}={2g+2-4\gamma}
\end{equation}

for $p\equiv{1}mod{4}$ and for $p\equiv{3}mod{4}$ respectively. For a prime $p\in{\mathcal{P}_{\mathcal{S}}}$ we have ${\gamma}=0$ and the total branching number B is equal to the cardinality $2g+2$ of the set $\mathfrak{W}_p$ of the Weierstrass points of $X_0(p)$.  

The Gauss conjecture restricted to the class numbers for imaginary quadratic fields $\mathcal{K}={\mathbb{Q}(\sqrt{-p})}$ states that
\begin{equation}
\textit{h}_{\mathcal{K}}\rightarrow{\infty} \qquad as \quad  p\rightarrow{\infty}
\end{equation}  

and we have found a geometric interpretation of this fact given by the number of the ramification points of the projections ${{\pi}_{\omega}}$'s. Since in this case the whole class groups $\textit{Cl}_{\mathcal{K}}$ are the principal genus (which tends to be cyclic more often that not ~\cite{HI04}) we may approach the open question when $\textit{Cl}_{\mathcal{K}}$ is cyclic by looking for the properties of the ramification points of ${\pi}_{\omega}$ on $X_0(p)$.

Finally let us list some questions (which may already have nice and simple answers)
\begin{itemize}
\item Find the values of $P_p(\rho)$ for $p\in{\mathcal{P}_{\mathcal{S}}}$
\item Find $\widehat{u}_{p}\in{R}$
\item Use the relations between the coefficients $c_n$ of $j(\tau)$ and $\alpha^p_n$ of $G_p(\tau)$ for all $p\in{\mathcal{P}_{\mathcal{S}}}$ (determined in this paper) to find whether this leads to some vertex operator algebra.
\item Find whether $\Omega^p_{n}=0$ for all $p\in{\mathcal{P}_{\mathcal{S}}}$
\item If $\Omega^p_{n}\neq{0}$ determine whether $\Omega^p_{n}=\Omega^p_m$ for $n,m\notin(p)$, $n\neq{m}$. If p would have such property then we would have (pure not rational) universal p-constant $\Omega^{p}\notin{\mathbb{Q}}$.
\item Find whether $b_{n}\neq{1}$ for $n\notin(p)$
\item Find when the class group $\textit{Cl}_{\mathcal{K}}$, ${\mathcal{K}}={\mathbb{Q}(\sqrt{-p})}$,   is a cyclic one using the properties of the stable points of $\widetilde{\omega}_p$ on $X_0(p)$.
\item Express the class number $\textit{h}_{\mathcal{K}}$ for an arbitrary imaginary quadratic field $\mathcal{K}={\mathbb{Q}(\sqrt{-N})}$, $N>0$, using the Ogg's results and the Riemann-Hurwitz theorem for the appropriate projections and describe the group properties of $\textit{Cl}_{\mathcal{K}}$ using the properties of the ramification points (of these projections) on $X_0(N)$. 
\end{itemize}

\end{document}